\newtheorem{theorem}{Assertion}
\newtheorem{assumption}[theorem]{Assumption}
\theoremstyle{definition}
\newtheorem{remark}[theorem]{Remark}
\def\B{\mathbf{B}}
\def\H{\mathbf{H}}
\def\J{\mathbf{J}}
\def\wJ{\tilde{\mathbf{J}}}
\def\x{\mathbf{x}}
\newcommand{\bigid}{\operatorname{I}}
\newcommand{\bigE}{\operatorname{E}}
\newcommand{\bigH}{\operatorname{H}}
\def\eps{\varepsilon}
\def\RR{\mathbb{R}}
\newcommand{\best}[1]{\cellcolor{green!20}\textbf{#1}}
\newcommand{\worst}[1]{\cellcolor{red!20}\textbf{#1}}
\begin{document}
\title{
Efficient evaluation of forward and inverse \\energy-based magnetic hysteresis operators}
\author{\IEEEauthorblockN{Herbert Egger \IEEEauthorrefmark{1,2},
Felix Engertsberger\IEEEauthorrefmark{2} 
and Andreas Schafelner\IEEEauthorrefmark{2} }
\vspace{5pt}
\IEEEauthorblockA{\IEEEauthorrefmark{1}Johann Radon Institute for Computational and Applied Mathematics, Austrian Academy of Sciences, Linz 4040, Austria}
\IEEEauthorblockA{\IEEEauthorrefmark{2}Institute for Numerical Mathematics, Johannes Kepler University, Linz 4040, Austria}}

\IEEEtitleabstractindextext{%
\begin{abstract}
The energy-based vector hysteresis model of Francois-Lavet et al. establishes an implicit relation between magnetic fields and fluxes via internal magnetic polarizations which are determined by convex but non-smooth minimization problems. The systematic solution of these problems for every material point is a key ingredient for the efficient implementation of the model into standard magnetic field solvers. We propose to approximate the non-smooth terms via regularization which allows to employ standard Newton methods for the evaluation of the local material models while being in control of the error in this approximation. We further derive the inverse of the regularized hysteresis operator which amounts to a regularized version of the inverse hysteresis model. 
The magnetic polarizations in this model are again determined by local minimization problems which here are coupled across the different pinning forces. An efficient algorithm for solving the Newton systems is proposed which allows evaluation of the inverse hysteresis operator at the same cost as the forward model. Numerical tests on standard benchmark problems are presented for illustration of our results.
\end{abstract}

\begin{IEEEkeywords}
magnetic vector hysteresis,
inverse hysteresis operator,
regularization,
Newton method, Schur complement.
\end{IEEEkeywords}}

\maketitle
\thispagestyle{empty}
\pagestyle{empty}

\section{Introduction}
\IEEEPARstart{M}{agnetic} hysteresis models are essential to increase the accuracy of iron loss calculations and calibrate low-fidelity models for simulation of high-power devices like electric machines and transformers \cite{Dlala2006,Billah2020}.
Popular choices are the Jiles-Atherton model or Preisach operators \cite{Jiles1986,Mayergoyz1988} which, however, lack a rigorous thermodynamic background and access to local power losses during transient simulation. Another popular choice are vector play and stop models \cite{Serpico2004,Leite2005} which are based on local energy-balances in principle, but sacrifice those in favor of efficiency of the implementation. 
Thermodynamically consistent models for magnetic hysteresis were introduced by Berquist \cite{Bergqvist1997,Bergqvist1997a} and extended by Henrotte et al \cite{Henrotte2006}. We here consider the fully-implicit time-incremental version of the energy-based hysteresis model proposed by Francois-Lavet et al. \cite{Lavet2013}. This model is truly vectorial, it satisfies local energy balances, provides a clear definition of magnetic losses, and allows to adaptively tune accuracy. 
An equivalent version of the model was studied in \cite{Prigozhin2016,Kaltenbacher2022}.
Extensions of the model accounting for vanishing rotational losses at saturation, the effect of mechanical stresses, or material anisotropy have been considered in \cite{Sauseng2022,Roppert2025,Upadhaya2020}.

\subsection{Energy-based hysteresis model}
Using a fundamental principle for the decomposition of material relations, we may split the magnetic flux 
\begin{align}
    \B = \mu_0 \H + \J
    \label{eq:mat}
\end{align}
into a part proportional to the local magnetic field intensity $\H$ and a magnetic polarization density $\J$.
The latter describes the collective response of the material. A simple choice is to represent $\J = \sum\nolimits_{k=1}^K\J_k$ into multiple partial polarizations which encode the internal states of the system. The hysteresis model of \cite{Lavet2013} uses minimization problems 
\begin{align}
    &\J_k = \arg\min\nolimits_\J U_k(\J) - \langle \H, \J \rangle + \chi_k |\J - \J_{k,p}| \label{eq:for:1}
\end{align}
to implicitly describe the change in partial polarizations $\J_k$ from their previous value $\J_{k,p}$ in response to an applied magnetic field $\H$. 
The internal energy densities $U_k(\cdot)$ and pinning strengths $\chi_k \ge 0$ represent local material characteristics. The symbol $\langle \cdot, \cdot \rangle$ denotes the Euclidean scalar product and $|x|  = \sqrt{\langle x, x \rangle}$ the induced norm.
The model \eqref{eq:mat}--\eqref{eq:for:1} implicitly defines a relation $\B = \mathcal{B}(\H;\{\J_{k,p}\})$ between magnetic field $\H$ and induction $\B$, which additionally depends on the value of the previous internal states. 
The evaluation of this material law requires the numerical solution of convex but non-smooth minimization problems \eqref{eq:for:1} which can be done in parallel and using a duality approach proposed in \cite{Prigozhin2016}. The extension of this method to three dimensional problems and models using effective fields, however, is not directly possible. 

The mapping $\H \mapsto \mathcal{B}(\H;\{\J_{k,p}\})$ can be shown to be strongly monotone and Lipschitz continuous~\cite{egger2025semi}. 
Although the function is not differentiable in a classical sense, these properties at least partially explain the successful use of the model for the simulation of corresponding field problems based on the magnetic scalar potential \cite{Prigozhin2016,Jacques2015,Domenig2024}; see \cite{egger2025semi,egger2025quasi} for a rigorous analysis and provably convergent methods.

\subsection{Inverse hysteresis operator}
By inverting the relation $\B = \mathcal{B}(\H;\{\J_{k,p}\})$ one can obtain the inverse relation $\H = \mathcal{H}(\B;\{\J_{k,p}\})$ which is better suited for the implementation into vector potential formulations of corresponding magnetic field problems. 
Note that the existence of the inverse hysteresis operator $\mathcal{H}(\B;\{\J_{k,p}\})$ is guaranteed by the monotonicity properties of the forward operator.
The evaluation of the inverse operator by numerical inversion of the forward operator was studied in \cite{Jacques2015}. Since it involves the inversion of an implicitly defined function which moreover is not differentiable, this strategy turns out to be quite cumbersome~\cite{Jacques2018}.
In \cite{egger2024inverse} we showed that the inverse hysteresis operator can actually be described in a very similar manner as the forward operator, i.e., by using the fundamental relation
\begin{align}
\H = \nu_0 (\B - \J)
\end{align}
together with the splitting $\J=\sum\nolimits_k \J_k$ and defining the partial magnetic polarizations implicitly via solution of 
\begin{align} \label{eq:inv:1}
\min_{\{\J_k\}} \frac{\nu_0}{2} |\B - \sum\nolimits_k \J_k|^2 + \sum\nolimits_k U_k(\J_k) + \chi_k |\J_k - \J_{k,p}|.
\end{align}
This again is a convex and non-smooth minimization problem which, in contrast to \eqref{eq:for:1}, now requires to solve for all magnetic polarizations simultaneously. 
As a consequence, the duality approach of \cite{Prigozhin2016} is no longer available for the efficient solution of \eqref{eq:inv:1}. 
As mentioned in \cite{Prigozhin2016}, a similar difficulty also arises in the forward hysteresis model when replacing $\H$ by an effective field $\H_{eff}=\H+\alpha \J$ in the minimization problems \eqref{eq:for:1}.

\subsection{Scope and main contributions}
In this paper, we discuss the efficient numerical evaluation of forward and inverse energy-based hysteresis operators based on the following rationale.  
In a first step, we approximate the terms $|\J_k - \J_{k,p}| \approx |\J_k - \J_{k,p}|_\eps$, $\eps>0$ in \eqref{eq:for:1} respectively \eqref{eq:inv:1} by a regularized version of the norm which is differentiable at $\J_k = \J_{k,p}$. 
This allows to employ standard Newton methods with line search for the solution of the regularized minimization problems. 
We further show that the error introduced by this approximation can be fully controlled by choosing $\eps$ sufficiently small. 
Moreover, we prove that the inverse of the regularized forward hysteresis operator amounts to the regularized version of the inverse hysteresis operator, which can be done by elementary arguments. 
In a final step, we show that the coupling between the partial polarizations in \eqref{eq:inv:1} can be overcome in the numerical solution by the Newton method. As a consequence, the inverse hysteresis operator can be evaluated at essentially the same cost as the forward hysteresis operator. 
The theoretical results are demonstrated by numerical tests for some typical benchmark problems.

\section{Regularized hysteresis operator}
We utilize $|\x|_{\eps} := \sqrt{|\x|^2 + \eps}$, $\eps \ge 0$ as approximation for the Euclidean norm $|\x|$. For $\eps>0$, the regularized norm is infinitely differentiable, while for $\eps=0$, we have $|\x|_0=|\x|$. Moreover, $|\x| \le |\x|_\eps \le |\x| + \sqrt{\eps}$; hence the norm $|\x|$ can be approximated with any desired accuracy by smooth functions.
We consider hysteresis models with $K>0$ partial polarization and make the following assumptions for our analysis.
\begin{assumption}     \label{ass:1}
The internal energy densities are given by $ U_k(\J)= -\frac{2 A_{s,k} J_{s,k}}{\pi} \log(\cos(\frac{\pi}{2}\frac{\J}{J_{s,k}}))$ with given parameters $A_{s,k},J_{s,k} > 0$; furthermore $\chi_k \geq 0$ for $k = 1,\dots,K$.
\end{assumption}
As a consequence, the minimization problems \eqref{eq:for:1} and \eqref{eq:inv:1} are strongly convex and uniquely solvable. 
Let us note that other choices for the energy densities $U_k(\J)$ and anisotropic variants $|\chi_k (\J_k - \J_{k,p})|$ of the pinning terms with matrix valued parameters $\chi_k$ could be considered with similar arguments~\cite{Prigozhin2016}.

\subsection{Regularized problem and regularization error}
Following the construction of the forward hysteresis operator, we define $\B = \mu_0 \H + \sum\nolimits_k \J_k^\eps$ with 
\begin{align}
    &\J_k^\eps = \arg\min_\J U_k(\J) - \langle \H, \J \rangle + \chi_k |\J - \J_{k,p}|_{\eps}.\label{eq:for:reg:1}
\end{align}
Note that the difference to \eqref{eq:for:1} only appears in the last term. Since the regularized norm $|\x|_\eps$ is convex, the existence of unique minimizers again follows immediately. 
Moreover, the error introduced by regularization can be estimated as follows. 
\begin{theorem}\label{theo:1}
Let Assumption \ref{ass:1} hold and $\H,\J_{k,p}$ be given. 
Further let $\J_k$ and $\J_k^\eps$, $\eps>0$ denote the solutions of \eqref{eq:for:1} and \eqref{eq:for:reg:1}, respectively. 
Then $|\J_k - \J_k^\eps| = O(\sqrt{\eps})$. 
\end{theorem}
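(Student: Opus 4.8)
The plan is to exploit strong convexity of the two minimization functionals together with the pointwise closeness of the regularized norm to the true norm. Let me sketch the key structure.

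The functional in problem \eqref{eq:for:1} reads $F(\J) = U_k(\J) - \langle \H, \J \rangle + \chi_k |\J - \J_{k,p}|$, and the regularized functional $F_\eps(\J) = U_k(\J) - \langle \H, \J \rangle + \chi_k |\J - \J_{k,p}|_\eps$ differs only in the last term. Under Assumption~\ref{ass:1}, the internal energy $U_k$ is strongly convex, and since the remaining terms are convex, both $F$ and $F_\eps$ are strongly convex with the same modulus $m > 0$ inherited from $U_k$. I would denote by $\J_k$ and $\J_k^\eps$ the respective minimizers.

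The main estimate plan is the following. By the variational inequality characterizing each minimizer — or equivalently by strong convexity — one has the standard bound relating the distance of minimizers to the sup-norm distance between the two functionals. Concretely, I would use that strong convexity of $F$ with modulus $m$ gives $\frac{m}{2}|\J_k - \J_k^\eps|^2 \le F(\J_k^\eps) - F(\J_k)$, and combine this with $F_\eps(\J_k) \ge F_\eps(\J_k^\eps)$ from minimality of $\J_k^\eps$. Adding these and cancelling the common terms $U_k$ and $\langle \H, \cdot\rangle$ leaves only the pinning contributions, so that
\begin{align}
\frac{m}{2}|\J_k - \J_k^\eps|^2
&\le \chi_k\bigl(|\J_k^\eps - \J_{k,p}| - |\J_k^\eps - \J_{k,p}|_\eps\bigr) \notag\\
&\quad + \chi_k\bigl(|\J_k - \J_{k,p}|_\eps - |\J_k - \J_{k,p}|\bigr). \notag
\end{align}

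Now the key elementary fact, already recorded in the excerpt, is that $|\x| \le |\x|_\eps \le |\x| + \sqrt{\eps}$, so each difference $|\x|_\eps - |\x|$ is bounded by $\sqrt{\eps}$ and each difference $|\x| - |\x|_\eps$ is nonpositive. Discarding the nonpositive term and bounding the remaining one yields $\frac{m}{2}|\J_k - \J_k^\eps|^2 \le \chi_k \sqrt{\eps}$, and hence $|\J_k - \J_k^\eps| \le \sqrt{2\chi_k/m}\,\eps^{1/4}$, which already gives a rate. I expect the main obstacle to be sharpening this crude $O(\eps^{1/4})$ bound to the claimed $O(\sqrt{\eps})$. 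The gain should come from observing that the functional gap $F_\eps - F$ is itself $O(\sqrt{\eps})$ \emph{uniformly}, while strong convexity of $F_\eps$ lets one estimate the \emph{gradient} mismatch at the optimal point rather than only the function-value gap. The cleaner route is to write the optimality (Euler--Lagrange) condition for $\J_k^\eps$, namely $\nabla U_k(\J_k^\eps) - \H + \chi_k \partial_\eps \in 0$ where the regularized pinning gradient $\partial_\eps = (\J_k^\eps - \J_{k,p})/|\J_k^\eps - \J_{k,p}|_\eps$, and compare it against the subdifferential inclusion for $\J_k$. Since $\partial_\eps$ and the corresponding subgradient of the true norm differ by $O(\sqrt{\eps})$ away from $\J_{k,p}$ (and both are bounded by $1$ in magnitude throughout), testing the difference of the two optimality relations with $\J_k - \J_k^\eps$ and using the strong monotonicity of $\nabla U_k$ converts the $\sqrt{\eps}$ gradient perturbation into a linear-in-distance estimate, yielding $m\,|\J_k - \J_k^\eps|^2 \le \chi_k\sqrt{\eps}\,|\J_k - \J_k^\eps|$ and thus the sharp $|\J_k - \J_k^\eps| = O(\sqrt{\eps})$. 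The delicate point to handle carefully is the behaviour near the non-smooth kink $\J = \J_{k,p}$, where the subgradient of $|\cdot|$ is set-valued; there I would select an appropriate element of the subdifferential and verify the $\sqrt{\eps}$ comparison remains valid uniformly.
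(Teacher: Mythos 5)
The first half of your argument is precisely the paper's proof: both rest on the uniform strong convexity inherited from $U_k$ and on splitting the function-value gap so that the two pinning contributions are the only survivors, one nonpositive and one bounded by $\chi_k\sqrt{\eps}$ via $|\x|\le|\x|_\eps\le|\x|+\sqrt{\eps}$. You correctly arrive at $\tfrac{m}{2}|\J_k-\J_k^\eps|^2\le\chi_k\sqrt{\eps}$, and you are also right that this only gives $|\J_k-\J_k^\eps|=O(\eps^{1/4})$. The paper stops exactly there; its concluding line, and the statement of the assertion, should be read with a square on the distance, i.e.\ $|\J_k-\J_k^\eps|^2=O(\sqrt{\eps})$, which is how the companion result for the inverse operator (Assertion~\ref{theo:2}) is in fact phrased. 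So the part of your proposal that is sound is the whole intended proof.

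The proposed sharpening to the unsquared rate $O(\sqrt{\eps})$ does not work, for two reasons. First, the key comparison you invoke is false where it matters: for $\x=\J-\J_{k,p}$ with $|\x|\lesssim\sqrt{\eps}$, the regularized gradient $\x/|\x|_\eps$ and the subgradient $\x/|\x|$ differ by an $O(1)$ amount (e.g.\ at $|\x|=\sqrt{\eps}$ the difference in norm is $1-1/\sqrt{2}$), and no selection from the subdifferential at the kink repairs this; the difference is only $O(\eps)$ once $|\x|$ is bounded below independently of $\eps$. Testing the difference of the optimality systems with $\J_k-\J_k^\eps$ and using $\langle s,\x\rangle=|\x|$, $|s|\le1$ only reproduces $\gamma|\J_k-\J_k^\eps|^2\le\chi_k\sqrt{\eps}$, i.e.\ the same $\eps^{1/4}$ bound. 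Second, and more decisively, the unsquared estimate $|\J_k-\J_k^\eps|=O(\sqrt{\eps})$ fails in the worst case, so no correct argument can deliver it uniformly in the data. Take $d=1$, $\J_{k,p}=0$, replace $U_k$ by its quadratic approximation $\tfrac{\gamma}{2}J^2$ near the origin (which is what Assumption~\ref{ass:1} gives to leading order), and choose the critical excitation $\H=\chi_k$. Then $\J_k=0$, while the regularized stationarity condition reads
\begin{align*}
\gamma J \;=\; \chi_k\Bigl(1-\tfrac{J}{\sqrt{J^2+\eps}}\Bigr)\;=\;\frac{\chi_k\,\eps}{\sqrt{J^2+\eps}\,\bigl(\sqrt{J^2+\eps}+J\bigr)},
\end{align*}
whose positive solution balances as $2\gamma J^3\approx\chi_k\eps$, i.e.\ $\J_k^\eps\sim(\chi_k\eps/2\gamma)^{1/3}$. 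Hence the distance scales like $\eps^{1/3}$ at the pinning threshold — consistent with the squared bound $O(\sqrt{\eps})$, but not with $O(\sqrt{\eps})$ for the distance itself. The better-than-proved rate observed numerically in the paper is an average over many time steps and does not contradict this worst-case behaviour.
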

\begin{proof}
The function $F_\eps(\J) = U_k(\J) - \langle \H,\J\rangle + \chi_k |\J - \J_{k,p}|_\eps$ is infinitely differentiable on its domain and its Hessian satisfies $\nabla^2 F_\eps(\J) \ge \nabla^2 U_k(\J) \ge \gamma I$ with $\gamma>0$ depending only on $A_{s,k}$ and $\J_{s,k}$. This shows in particular that $F_\eps(\J)$ is strongly convex, uniformly for all $\eps \ge 0$.
We can thus estimate
\begin{align*}
\tfrac{\gamma}{2} |\J_k &- \J_k^\eps|^2 
\leq F_\eps(\J_k) - F_\eps(\J_k^\eps) \\
&= [ F_\eps(\J_k) - F_0(\J_k)] + [F_0(\J_k) - F_\eps(\J_k^\eps)] 
\end{align*}
The first term can be approximated by $\sqrt{\epsilon}$, since $|\x|_{\epsilon} \leq |\x| + \sqrt{\epsilon} $, and the second term is non-positive, since $|\x|_0 \le |\x_\eps|$ for $\x=\J_k^\eps - \J_{k,p}$. 
This shows that $|\J_k - \J_k^\eps| \le 2\sqrt{\eps}/\gamma$. 
\end{proof}
The minimizers $\J_k$ of \eqref{eq:for:1} can thus be approximated at any level of accuracy by the minimizers $\J_k^\eps$ of the regularized problems \eqref{eq:for:reg:1}. Moreover, the error in this approximation can be fully controlled by appropriate choice of $\eps>0$.

\subsection{Regularized Newton method}
For the numerical solution of the regularized minimization problems \eqref{eq:for:reg:1}, we apply a damped Newton method.
Starting from an initial guess $\J_k^0$, the further iterates are defined by 
\begin{align} \label{eq:new:1}
\J_k^{n+1} = \J_k^{n} + \tau^n \delta \J_k^n, \qquad n \ge 0
\end{align}
with search directions $\delta \J_k^n$ defined by the Newton systems
\begin{align} \label{eq:new:2}
\nabla^2 F_\eps(\J_k^n) \, \delta \J_k^n = - \nabla F_\eps(\J_k^n).
\end{align}
The function $F_\eps(\cdot)$ is the same as used in the proof of the previous result. 
The step-size $\tau^n$ is determined by \emph{Armijo back-tracking}, i.e.,   
we fix $0<\sigma< 1/2$ resp. $\rho<1$, and define
\begin{align} \label{eq:new:3}
\tau^n = \max\{&\tau= \rho^m, \quad m \ge 0, \quad \text{such that}  \\
F_\eps(\J_k^n &+ \tau \delta \J_k^n) \le F_\eps(\J_k^n) + \tau \sigma \langle \nabla F_{\eps}(\J_k^n), \delta  \J_k^n \rangle \}.    \notag
\end{align}
From standard optimization theory~\cite{Nocedal2006}, we obtain
\begin{theorem}
Let Assumption~\ref{ass:1} hold and $\eps>0$. Then the iteration \eqref{eq:new:1}--\eqref{eq:new:3} converges globally and locally quadratically to the unique minimizer of \eqref{eq:for:reg:1}.      
\end{theorem}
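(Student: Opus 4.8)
The plan is to verify the hypotheses of the standard global and local convergence theory for line-search Newton methods (e.g.\ \cite[Thms.~3.2 and 3.6]{Nocedal2006}) for the specific function $F_\eps$ at hand. First I would record the structural properties already established in the proof of Theorem~\ref{theo:1}: the function $F_\eps$ is $C^\infty$ on its domain $D = \{\J : |\J| < J_{s,k}\}$, and its Hessian satisfies $\nabla^2 F_\eps(\J) \ge \gamma I$ with $\gamma>0$ independent of $\eps$. Strong convexity yields a unique stationary point $\J_k^\ast \in D$, which is the minimizer of \eqref{eq:for:reg:1} and is characterized by $\nabla F_\eps(\J_k^\ast)=0$. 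Since $U_k$ is a logarithmic barrier, $F_\eps(\J) \to +\infty$ as $|\J| \to J_{s,k}$; hence the sublevel set $S = \{\J : F_\eps(\J) \le F_\eps(\J_k^0)\}$ of the initial guess $\J_k^0$ is a compact subset of the open domain $D$. On this compact set continuity of $\nabla^2 F_\eps$ provides an upper bound $\nabla^2 F_\eps(\J) \le L\,I$, so that $\nabla F_\eps$ is Lipschitz and the condition number of the Hessian is uniformly bounded on $S$.

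Next I would check the two ingredients required for global convergence. The Newton direction defined by \eqref{eq:new:2} is a genuine descent direction, because $\langle \nabla F_\eps(\J_k^n), \delta\J_k^n\rangle = -\langle \nabla F_\eps(\J_k^n), (\nabla^2 F_\eps(\J_k^n))^{-1}\nabla F_\eps(\J_k^n)\rangle < 0$ whenever $\nabla F_\eps(\J_k^n)\neq 0$; moreover the uniform spectral bounds $\gamma I \le \nabla^2 F_\eps \le L\,I$ on $S$ force the angle $\theta_n$ between $-\nabla F_\eps(\J_k^n)$ and $\delta\J_k^n$ to satisfy $\cos\theta_n \ge \gamma/L>0$. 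The Armijo rule \eqref{eq:new:3} is then well defined and returns a positive step $\tau^n$, and the resulting monotone decrease $F_\eps(\J_k^{n+1}) < F_\eps(\J_k^n)$ keeps all iterates in $S$. Applying Zoutendijk's condition on $S$ gives $\sum_n \cos^2\theta_n\,|\nabla F_\eps(\J_k^n)|^2 < \infty$, whence $\nabla F_\eps(\J_k^n)\to 0$; strong convexity then upgrades this to $\J_k^n \to \J_k^\ast$.

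For the local rate I would argue that, once $\J_k^n$ is sufficiently close to $\J_k^\ast$, the full step $\tau=1$ is accepted by the Armijo test; this is the standard consequence of a Lipschitz Hessian together with the curvature bound $\gamma$, and from that point on the scheme \eqref{eq:new:1}--\eqref{eq:new:2} reduces to the pure Newton iteration. The classical Newton estimate, using the Lipschitz continuity of $\nabla^2 F_\eps$ on $S$ and the bound $\|(\nabla^2 F_\eps)^{-1}\| \le 1/\gamma$, then yields $|\J_k^{n+1} - \J_k^\ast| \le C\,|\J_k^n - \J_k^\ast|^2$, i.e.\ local quadratic convergence.

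The one point that needs genuine care, rather than a verbatim appeal to the textbook, is that $F_\eps$ is defined only on the bounded domain $D$ imposed by the barrier $U_k$: the textbook theorems presuppose bounds such as a globally Lipschitz gradient and a bounded Hessian that do not hold on all of $D$. The remedy — and the main step I would emphasize — is to confine the analysis to the compact sublevel set $S$, which the monotonicity of the line search guarantees the iterates never leave. On $S$ all the required uniform bounds (Lipschitz $\nabla F_\eps$, bounded condition number, Lipschitz $\nabla^2 F_\eps$) follow automatically by compactness, and this is exactly what turns the local uniform convexity of Assumption~\ref{ass:1} into the global-type hypotheses needed above.
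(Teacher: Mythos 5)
Your proposal is correct and follows exactly the route the paper takes: the paper proves this assertion by a bare appeal to the standard global and local convergence theory for damped Newton methods in \cite{Nocedal2006}, and your argument is a faithful verification of the hypotheses of those theorems. The one point you rightly flag as needing genuine care --- confining the analysis to the compact sublevel set $S$ so that the logarithmic barrier's unbounded Hessian near $|\J|=J_{s,k}$ causes no trouble (with the tacit convention that $F_\eps=+\infty$ outside the domain, so that backtracking trial points leaving it are rejected by the Armijo test) --- is handled correctly and is the only part not already contained in the textbook statements.
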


\begin{remark}
Since $\J \in \RR^d$ for dimension $d \le 3$, every step of the Newton iteration requires $O(1)$ operations. Due to the local quadratic convergence, the expected number of Newton iterations is $O(1)$ as well. The numerical effort for evaluating $\B=\mathcal{B}(\H;\{\J_{k,p}\})$ thus is $O(K)$ for $K$ partial polarizations.
\end{remark}

\section{Inverse Hysteresis Operator}
In analogy to before, we define $\H = \nu_0 (\B - \sum_k \J_k^\eps)$ with partial polarizations now defined by 
\begin{align}\label{eq:inv:reg:1}
\min_{\{\J_k\}} \frac{\nu_0}{2} |\B - \sum\nolimits_k \J_k|^2 + \sum\nolimits_k  U_k(\J_k) + \chi_k |\J_k - \J_{k,p}|_{\eps}. 
\end{align}
Existence of a unique set of minimizers $\{\J_k^\eps\}$ again follows from the strong convexity of the objective function. Note that similar to \eqref{eq:inv:1}, but in contrast to \eqref{eq:for:1} and \eqref{eq:for:reg:1}, the minimization has to be done for all partial polarization simultaneously.

\subsection{Analysis of the regularization error}

With a very similar reasoning as employed for the analysis of the forward operator, we obtain the following result. 
\begin{theorem} \label{theo:2}
Let Assumption \ref{ass:1} hold and $\B,\J_{k,p}$, $1,\ldots,K$ be given. 
Further let $\{\J_k\}$ and $\{\J_k^\eps\}$, $\eps>0$ denote the solutions of \eqref{eq:inv:1} and \eqref{eq:inv:reg:1}, respectively. 
Then $\sum_k |\J_k - \J_k^\eps|^2 = O(\sqrt{\eps})$. 
\end{theorem}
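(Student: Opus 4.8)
The plan is to reproduce the strong-convexity argument of Theorem~\ref{theo:1}, but applied to the \emph{joint} objective over all partial polarizations simultaneously. I would introduce
\begin{align*}
G_\eps(\{\J_k\}) = \tfrac{\nu_0}{2}\big|\B - \sum\nolimits_k \J_k\big|^2 + \sum\nolimits_k U_k(\J_k) + \sum\nolimits_k \chi_k\,|\J_k - \J_{k,p}|_\eps,
\end{align*}
regarded as a function of the stacked variable $(\J_1,\dots,\J_K)\in\RR^{dK}$, so that $\{\J_k\}$ minimizes $G_0$ and $\{\J_k^\eps\}$ minimizes $G_\eps$. As in the forward case, the whole argument hinges on a uniform lower bound $\nabla^2 G_\eps \ge \gamma\,I$ with $\gamma>0$ independent of $\eps\ge0$, from which strong convexity at the minimizer gives $\tfrac{\gamma}{2}\sum_k|\J_k-\J_k^\eps|^2 \le G_\eps(\{\J_k\}) - G_\eps(\{\J_k^\eps\})$.

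The step I expect to be the genuine obstacle, and the only real departure from Theorem~\ref{theo:1}, is establishing this uniform strong convexity in the \emph{coupled} variable. The data term $\tfrac{\nu_0}{2}|\B-\sum_k\J_k|^2$ has a Hessian that is only positive semidefinite on $\RR^{dK}$: it degenerates along every direction that leaves $\sum_k\J_k$ unchanged, so no coercivity can be extracted from it, and the regularized pinning terms are likewise merely convex. Hence the entire strong convexity must come from the internal energies. By Assumption~\ref{ass:1} each $U_k$ satisfies $\nabla^2 U_k \ge \gamma_k\,I$, and since $\sum_k U_k(\J_k)$ has block-diagonal Hessian, the full Hessian of $G_\eps$ is bounded below by $\gamma\,I$ with $\gamma=\min_k\gamma_k$, uniformly in $\eps\ge0$. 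This is precisely where Assumption~\ref{ass:1} does the work that, in the forward problem, was partly supplied by the decoupled one-dimensional structure.

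With uniform strong convexity in hand, the remaining steps mirror the forward proof. I would split the right-hand side as $[G_\eps(\{\J_k\})-G_0(\{\J_k\})] + [G_0(\{\J_k\})-G_\eps(\{\J_k^\eps\})]$. The first bracket equals $\sum_k\chi_k(|\J_k-\J_{k,p}|_\eps - |\J_k-\J_{k,p}|) \le (\sum_k\chi_k)\sqrt{\eps}$ by the pointwise bound $|\x|_\eps\le|\x|+\sqrt{\eps}$, and the second bracket is non-positive since $\{\J_k\}$ minimizes $G_0$ while $G_0\le G_\eps$ pointwise (as $|\x|\le|\x|_\eps$), so that $G_0(\{\J_k\})\le G_0(\{\J_k^\eps\})\le G_\eps(\{\J_k^\eps\})$. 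Combining the two estimates yields $\sum_k|\J_k-\J_k^\eps|^2 \le \tfrac{2}{\gamma}(\sum_k\chi_k)\sqrt{\eps} = O(\sqrt{\eps})$, as claimed.
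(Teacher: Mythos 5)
Your proposal is correct and follows essentially the same route as the paper: concatenate the polarizations into a single variable, observe that the uniform strong convexity of the joint objective comes entirely from the block-diagonal Hessians of the $U_k$ (the data and pinning terms being merely convex), and then repeat the two-term splitting argument of Assertion~\ref{theo:1}. You have simply written out in full the steps the paper compresses into ``the rest of the proof is exactly the same as that for Assertion~\ref{theo:1}.''
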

\begin{proof}
Let $\wJ=[\J_1;\ldots;\J_K] \in \RR^{dK}$ denote the vector resulting by concatenating the partial polarizations $\J_k$. Then the objective function in \eqref{eq:inv:reg:1} can be written as $G_\eps(\wJ)$, which is strongly convex with parameter $\gamma/2$. To see this, one only needs to employ the strong convexity of the functions $U_k(\cdot)$, and the convexity of the other terms. The rest of the proof is exactly the same as that for Assertion~\ref{theo:1}. 
\end{proof}
We can thus again approximate the evaluation of the inverse hysteresis operator to any level of accuracy by computations of the regularized and smooth approximations.

\subsection{Relation between forward and inverse operator}
The smoothness of the regularized minimization problems \eqref{eq:for:reg:1} and \eqref{eq:inv:reg:1} allows us to establish the following result. 
\begin{theorem} \label{thm:3}
Let Assumption \ref{ass:1} hold and $\H,\J_{k,p}$, $1,\ldots,K$ be given. For $\eps \ge 0$, let $\J_k^\eps$, $k=1,\ldots,K$ denote the unique minimizers of \eqref{eq:for:1}. 
Then $\{\J_k^\eps\}$ are the unique minimizers of \eqref{eq:inv:reg:1} with $\B=\mu_0 \H + \sum_k \J_k^\eps$, i.e., the regularized inverse hysteresis operator is the inverse of the regularized forward operator.
\end{theorem}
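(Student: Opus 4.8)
The plan is to argue entirely through first-order optimality conditions, using that both regularized functionals are smooth and strongly convex, so that stationarity is necessary and sufficient for their unique minimizers. First I would record the optimality condition for the forward problem \eqref{eq:for:reg:1}. Each $\J_k^\eps$ minimizes the decoupled functional $U_k(\J)-\langle\H,\J\rangle+\chi_k|\J-\J_{k,p}|_\eps$, and since $\nabla_\x|\x|_\eps=\x/|\x|_\eps$, this yields, for $k=1,\dots,K$,
\begin{align*}
\nabla U_k(\J_k^\eps)-\H+\chi_k\frac{\J_k^\eps-\J_{k,p}}{|\J_k^\eps-\J_{k,p}|_\eps}=0.
\end{align*}

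Next I would differentiate the coupled objective in \eqref{eq:inv:reg:1} with respect to each $\J_k$, which gives the stationarity system
\begin{align*}
-\nu_0\Bigl(\B-\sum\nolimits_j\J_j\Bigr)+\nabla U_k(\J_k)+\chi_k\frac{\J_k-\J_{k,p}}{|\J_k-\J_{k,p}|_\eps}=0.
\end{align*}
The crucial observation is that the coupling term $\nu_0(\B-\sum_j\J_j)$ plays exactly the role of $\H$: under the prescribed relation $\B=\mu_0\H+\sum_k\J_k^\eps$ together with the identity $\mu_0\nu_0=1$, one has $\nu_0(\B-\sum_k\J_k^\eps)=\H$. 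Substituting $\J_k=\J_k^\eps$ into the inverse stationarity system therefore collapses it, term by term, to the forward optimality condition above. Hence the forward minimizers satisfy the stationarity conditions of the inverse problem, and since that problem is strongly convex---as already established in the proof of Assertion~\ref{theo:2}---these conditions are sufficient and the solution is unique, so $\{\J_k^\eps\}$ is indeed the unique minimizer of \eqref{eq:inv:reg:1}.

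The case $\eps=0$ would be handled identically after replacing the gradient of the norm by its subdifferential; the non-smooth term $\chi_k|\J_k-\J_{k,p}|$ enters both problems in the same decoupled way, so the subdifferential inclusion transforms verbatim. I do not anticipate a genuine obstacle here, as the entire content reduces to the algebraic identity $\nu_0(\B-\sum_k\J_k^\eps)=\H$, which rests only on the constitutive relation and $\mu_0\nu_0=1$. The single point demanding care is the logical direction: I would verify that the forward solution solves the inverse stationarity system and then invoke strong convexity to conclude optimality and uniqueness, rather than attempting to match the two minimizers directly.
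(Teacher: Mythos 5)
Your proposal is correct and, for $\eps>0$, follows exactly the paper's argument: write the first-order conditions of \eqref{eq:for:reg:1} and \eqref{eq:inv:reg:1}, observe that $\nu_0(\B-\sum_k\J_k^\eps)=\H$ under the prescribed relation, and conclude by strong convexity that the stationary point is the unique minimizer. The only divergence is the case $\eps=0$: the paper handles it by passing to the limit $\eps\to 0$ and invoking the convergence results of Assertions~\ref{theo:1} and \ref{theo:2}, whereas you propose to run the same substitution directly on the subdifferential inclusions. Your route is equally valid (the non-smooth pinning term enters both problems identically and all other terms are differentiable, so the sum rule for subdifferentials applies and the same subgradient element certifies optimality in both systems) and is arguably more self-contained, since it does not lean on the earlier error estimates; the paper's limiting argument avoids any explicit use of convex subdifferential calculus. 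Either way the conclusion stands.
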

\begin{proof}
Let $\eps>0$. Then the minimizers of \eqref{eq:for:reg:1} satisfy
\begin{align*}
\nabla U_k(\J_k^\eps) - \H + \chi_k \frac{\J_k^\eps - \J_{k,p}}{|\J_k^\eps - \J_{k,p}|_\epsilon} = 0 
\end{align*}
for $k=1,\ldots,K$. The optimality conditions for 
\eqref{eq:inv:reg:1}, on the other hand, read
\begin{align}
-\nu_0 (\B - \sum\nolimits_k \J_k) + \nabla U_k(\J_k) + \chi_k \frac{\J_k - \J_{k,p}}{|\J_k - \J_{k,p}|_\epsilon} = 0, 
\end{align}
for $k=1,\ldots,K$. This system is satisfied for $\B=\mu_0 \H + \sum_k \J_k^\eps$ with $\J_k=\J_k^\eps$, which follows from the previous identities. This already shows the claim for $\eps>0$. The result for $\eps=0$ follows by taking the limit $\eps \to 0$ in both problems and using the convergence results from before.
\end{proof}

\subsection{Iterative solution by a damped Newton method}
As noted in the proof of Assertion~\ref{theo:2}, the minimization problem \eqref{eq:inv:reg:1} can be stated compactly as $\min_{\wJ} G_\eps(\wJ)$, where $\wJ=[\J_1;\ldots,\J_K] \in \RR^{dK}$ denotes the vector resulting from concatenation of the partial polarizations $\J_k$. 
Starting from an initial guess $\wJ^0$, we construct the iterates
\begin{align} \label{eq:new:inv:1}
\wJ^{n+1} = \wJ^{n} + \tau^n \delta \wJ^n, \qquad n \ge 0
\end{align}
with search directions $\delta \wJ^n$ defined by the Newton systems
\begin{align} \label{eq:new:inv:2}
\nabla^2 G_\eps(\wJ^n) \, \delta \wJ^n = - \nabla G_\eps(\wJ^n).
\end{align}
The step size $\tau^n$ is again determined by \emph{Armijo back-tracking}
\begin{align} \label{eq:new:inv:3}
\tau^n = \max\big\{&\tau= \rho^m: m \ge 0 \quad \text{such that}  \\
G_\eps(\wJ^n &+ \tau \delta \wJ^n) \le G_\eps(\wJ^n) + \tau \sigma \langle \nabla G_{\eps}(\wJ^n), \delta  \wJ^n\rangle\big\}.    \notag
\end{align}
with appropriate constants $0<\sigma< 1/2$ resp. $\rho<1$. 
From convexity and smoothness of the function $G_\eps(\cdot)$, we immediately obtain the following conclusions.
\begin{theorem}
Let Assumption~\ref{ass:1} hold and $\eps>0$. Then the iteration \eqref{eq:new:inv:1}--\eqref{eq:new:inv:3} converges globally and locally quadratically to the unique minimizer of \eqref{eq:inv:reg:1}.      
\end{theorem}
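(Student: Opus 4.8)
The plan is to invoke the standard convergence theory for the damped Newton method with Armijo line search, as found in~\cite{Nocedal2006}, applied to the objective $G_\eps(\cdot)$. The key point is to verify that $G_\eps$ satisfies the hypotheses required by that theory, which are essentially: strong convexity (to guarantee a unique minimizer and bounded sublevel sets), and Lipschitz continuity of the Hessian on the relevant domain (to obtain local quadratic convergence). Both properties follow from the structure of $G_\eps$ established earlier.

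First I would recall that $G_\eps(\wJ)$ is strongly convex with parameter $\gamma/2$, which was already shown in the proof of Assertion~\ref{theo:2}: the internal energy densities $U_k$ are strongly convex by Assumption~\ref{ass:1}, the quadratic coupling term $\tfrac{\nu_0}{2}|\B - \sum_k \J_k|^2$ is convex, and the regularized pinning terms $\chi_k |\J_k - \J_{k,p}|_\eps$ are convex. Strong convexity guarantees that the sublevel set $\{\wJ : G_\eps(\wJ) \le G_\eps(\wJ^0)\}$ is compact, so the iterates remain in a bounded region. Next I would observe that for $\eps>0$ the regularized norm $|\cdot|_\eps$ is infinitely differentiable, so $G_\eps \in C^\infty$ on $\RR^{dK}$ (restricted to the admissible domain where the $U_k$ are finite); in particular $\nabla^2 G_\eps$ is Lipschitz continuous on the compact sublevel set, and it is uniformly positive definite there since $\nabla^2 G_\eps(\wJ) \ge \tfrac{\gamma}{2} I$.

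With these two ingredients in place, the conclusion is a direct citation of the global convergence theorem for Newton's method with Armijo back-tracking: the Armijo condition~\eqref{eq:new:inv:3} is satisfiable at each step because the Newton direction is a descent direction (as $\nabla^2 G_\eps$ is positive definite, so $\langle \nabla G_\eps(\wJ^n), \delta\wJ^n\rangle = -\langle \delta\wJ^n, \nabla^2 G_\eps(\wJ^n)\,\delta\wJ^n\rangle < 0$ unless $\nabla G_\eps(\wJ^n)=0$), and the combination of positive-definite Hessian with Lipschitz Hessian yields that full steps $\tau^n=1$ are eventually accepted, giving local quadratic convergence. Global convergence to the unique minimizer follows since the only stationary point of a strongly convex function is its global minimizer. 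This argument is essentially identical to the one underlying the earlier forward-operator convergence assertion; the only formal difference is that the domain is now $\RR^{dK}$ rather than $\RR^d$, but this plays no role in the abstract convergence statement.

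I expect the main obstacle to be largely cosmetic rather than mathematical: one must be slightly careful about the domain of $G_\eps$, since the $U_k$ are only finite for $|\J_k| < J_{s,k}$ (the $\log\cos$ barrier), so $G_\eps$ is defined on an open box and blows up at its boundary. This barrier behavior is in fact favorable, as it keeps the iterates in the interior and reinforces compactness of sublevel sets, but it should be acknowledged so that the ``Lipschitz Hessian on a compact set'' hypothesis is correctly stated on a compact subset of the open domain. Beyond this bookkeeping, there is no genuine difficulty, which is exactly why the statement can be obtained ``immediately'' from the cited optimization theory.
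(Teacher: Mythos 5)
Your proposal is correct and follows the same route as the paper, which simply invokes the standard convergence theory for damped Newton methods with Armijo back-tracking, justified by the strong convexity and smoothness of $G_\eps$ established in the proof of Assertion~\ref{theo:2}. Your additional care about the open domain induced by the $\log\cos$ barrier and the Lipschitz continuity of the Hessian on compact sublevel sets only makes explicit what the paper leaves implicit.
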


\begin{remark}
Due to the local quadratic convergence, we expect convergence in $O(1)$ iterations. However, every step of the Newton iteration now requires $O(K^3)$ operations, since $\wJ \in \RR^{Kd}$ and $d \le 3$, and the Hessian $\nabla^2 G_\eps(\wJ)$ is densely populated.  The numerical effort for evaluating the inverse hysteresis operator $\H=\mathcal{H}(\B;\{\J_{k,p}\})$ thus is $O(K^3)$ when using $K$ partial polarizations $\J_k$.
\end{remark}

\subsection{Schur complement}
\label{subsec:schur}
We now study in more detail the computation of the Newton directions \eqref{eq:new:inv:2}, which is the main numerical effort for evaluation of the inverse hysteresis operator. 
For abbreviation, we introduce $g_k(\J_k) := U_k(\J_k)  + \chi_k |\J_k-\J_{k,p}|_{\eps}$ and skip the iteration index $n$. The Newton update $\delta \wJ=[\delta \J_1;\ldots;\delta\J_K]$ is thus determined by the system of equations
\begin{align} \label{eq:newton:inv:1}
\nu_0 \sum\nolimits_\ell  \delta \J_\ell &+  \nabla^2_{\J_k} g_k(\J_k) \,\delta \J_k  \\
&= \nu_0 (\B - \sum\nolimits_\ell \J_k) - \nabla g_k(\J_k) \notag
\end{align}
for $k=1,\ldots,K$, which simply amount to the corresponding blocks of \eqref{eq:new:inv:2}. 
Let us note that the update $\delta \J_k$ for the partial polarizations only couples through the leading linear term. 
By introducing the total update $\delta \J := \sum\nolimits_\ell \delta \J_\ell$ as an auxiliary variable, we may rewrite \eqref{eq:newton:inv:1} equivalently as a block system
\begin{align} 
\begin{pmatrix}
  \nu_0 \bigH_K
  &   \bigE_K^\top \\
  \bigE_K & -\bigid &
\end{pmatrix}
\begin{pmatrix}
  \delta \wJ  \\
  \delta \J 
\end{pmatrix}
=
-\begin{pmatrix}
  \nabla G_\eps(\wJ) \\
  0 
\end{pmatrix}.
\label{eq:newton:inv:2}
\end{align}
Here $\bigH_K^n$ denotes the block diagonal matrix obtained from the partial Hessians $\nabla_{\J_k}^2 g_k^n(\J_k^n)$ and $\bigE_K$ denotes the $K$-fold column-wise concatenation of the $d \times d$ identity matrix $\bigid$. The following result is a key observation for the efficient implementation of the inverse hysteresis operator. 

\begin{theorem}
Let $K>0$ be the number of partial polarizations. Then  the linear system \eqref{eq:newton:inv:2} can be solved in $O(K)$ complexity.
\end{theorem}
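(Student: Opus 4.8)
The plan is to solve \eqref{eq:newton:inv:2} by Schur complement elimination with respect to the auxiliary variable $\delta \J \in \RR^d$, exploiting that the coupling between the partial updates acts only through this single $d$-dimensional quantity while the $(1,1)$ block is block diagonal. First I would record the structure of the two blocks: $\nu_0 \bigH_K$ is block diagonal with the $K$ blocks $\nu_0 \nabla^2_{\J_k} g_k(\J_k)$ of size $d\times d$, and by Assumption~\ref{ass:1} together with the uniform Hessian bound established in the proof of Assertion~\ref{theo:1}, each such block satisfies $\nu_0\nabla^2_{\J_k}g_k \ge \nu_0\gamma\,\bigid > 0$ and is therefore symmetric positive definite and invertible. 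Since $d \le 3$, each $d\times d$ inverse costs $O(1)$, so the action of $(\nu_0\bigH_K)^{-1}$ can be applied blockwise in $O(K)$ operations.

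Next I would eliminate $\delta\wJ$ via the first block row, $\delta\wJ = (\nu_0\bigH_K)^{-1}\big({-}\nabla G_\eps(\wJ) - \bigE_K^\top \delta\J\big)$, and insert this into the second block row $\bigE_K\,\delta\wJ = \delta\J$. Because $\bigE_K$ sums the blocks while $\bigE_K^\top$ replicates, this produces the reduced $d\times d$ system for $\delta\J$ with system matrix
\[
S := \bigid + \bigE_K(\nu_0\bigH_K)^{-1}\bigE_K^\top = \bigid + \sum\nolimits_k (\nu_0\nabla^2_{\J_k} g_k)^{-1}
\]
and right-hand side $-\bigE_K(\nu_0\bigH_K)^{-1}\nabla G_\eps(\wJ) = -\sum\nolimits_k (\nu_0\nabla^2_{\J_k}g_k)^{-1}\nabla_{\J_k} G_\eps$. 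As a sum of $K$ symmetric positive definite $d\times d$ matrices plus $\bigid$, the Schur complement $S$ is symmetric positive definite and hence invertible; assembling $S$ and the right-hand side is $O(K)$, and solving the fixed-size $d\times d$ system is $O(1)$.

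Finally, back-substitution recovers the blocks of $\delta\wJ$ through $\delta\J_k = (\nu_0\nabla^2_{\J_k}g_k)^{-1}\big({-}\nabla_{\J_k}G_\eps - \delta\J\big)$, which again costs $O(K)$. Summing the three stages yields the claimed $O(K)$ complexity, in contrast to the $O(K^3)$ cost of a naive dense factorization of $\nabla^2 G_\eps$. I do not expect a genuine obstacle, since the result is essentially a consequence of the structural observation — already made explicit in \eqref{eq:newton:inv:2} — that the Hessian is block diagonal up to a rank-$d$ coupling. The only points requiring care are the invertibility of the diagonal blocks and of $S$; both follow from the uniform strong convexity of the $g_k$ inherited from Assumption~\ref{ass:1}, which guarantees that the elimination is well defined at every stage.
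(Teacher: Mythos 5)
Your proposal is correct and follows essentially the same route as the paper: eliminate $\delta\wJ$ from the first block row, form the $d\times d$ Schur complement system $[\bigE_K\bigH_K^{-1}\bigE_K^\top + \bigid]\,\delta\J = \bigE_K\bigH_K^{-1}\tilde b_K$ in $O(K)$ operations using the block-diagonality of $\bigH_K$, solve it in $O(1)$, and back-substitute in $O(K)$. Your additional remarks on the positive definiteness of the diagonal blocks and of the Schur complement (inherited from the strong convexity of the $g_k$) are a welcome explicit justification of the invertibility that the paper leaves implicit.
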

\begin{proof}
Let $\tilde b_K = -\nabla G_\eps(\wJ)$ denote the first part of the right hand side in \eqref{eq:newton:inv:2}. We can then express 
\begin{align}
    \delta \wJ = \bigH_K^{-1} (\tilde b_K - E_K^\top \delta \J).
    \label{eq:newton:inv:3}
\end{align}
Plugging this into the second row of \eqref{eq:newton:inv:2} yields a small $d \times d$ linear system of the form
\begin{align*}
[\bigE_K H_K^{-1} \bigE_K^\top + I] \, \delta \J = \bigE_K H_K^{-1} \tilde b_K =: b.
\end{align*}
Since $\bigH_K$ is block-diagonal, the setup of this Schur complement system can be accomplished in $O(K)$ operations, and its solution requires $O(1)$ operations.
Using \eqref{eq:newton:inv:3}, one can finally compute $\delta\wJ$ again in $O(K)$ complexity.
\end{proof}
In summary, we have shown that by using the Schur complement technique, the evaluation of the regularized inverse hysteresis operator $\H=\mathcal{H}(\B;\{\J_{k,p}\}) = \nu_0 (\B - \sum_k \J_k^\eps)$ can be performed in $O(K)$ operations, i.e., with essentially the same optimal complexity as the forward hysteresis operator.

\section{Numerical Illustration}
In the following, we illustrate the theoretical findings of this work by computations for a typical material model and two different magnetic field excitations. We demonstrate the error estimates for the regularized approximations, show the equivalence of the forward and inverse hysteresis models, and compare their computational performance. 

\subsection{Model Problems}
For the internal energy densities $U_k$ in Assumption \ref{ass:1} we use the parameters $A_{s,k} = 50$ and $J_{s,k}$ = $J_s/K$ with $K$ the total number of pinning forces an $J_s = 1.545$. 
For the pinning strengths, we assume a uniform distribution $\chi_k = 140(k-1)/(K-1)$ as it was used in \cite{Prigozhin2016}. Notice that by this choice of the material model we can observe the behavior for an increasing number of pinning forces. For real application purposes, these parameters have to be fitted to real measurements as it was considered for example in \cite{Upadhaya2020,Domenig2024}.
For the numerical experiments we discretize the time-interval $[0,1]$ into $500$ equidistant points $t^i$ and prescribe two different magnetic field excitations $\H_{\text{uni}}^i = 500\cdot [\sin((5/2)\pi \, t^i); 0]$ and a rotational field $\H_{\text{rot}}^i = H_m(t^i)\cdot [\sin(5 \pi t^i);\cos(5 \pi t^i)]$ with $H_m(t^i) = 500\cdot\min(t^i,0.75)$.
We always start in the demagnetized state $\J^\eps_{k,p}(t^0) = 0$ and then solve in every step the problem \eqref{eq:for:reg:1} with $\J^\eps_{k,p} = \J_k^\eps(t^{i-1})$. 
The same procedure is used for the inverse model \eqref{eq:inv:reg:1} with a sequence of magnetic fluxes sequence $\B^i$, $i \ge 1$ used as the input.
\subsection{Solver specification}
For the numerical solution of the regularized minimization problems \eqref{eq:for:reg:1} and \eqref{eq:inv:reg:1}, we utilize a damped Newton method.
For the inverse model \eqref{eq:inv:reg:1}, the straight forward inversion of the Newton system as well as the Schur complement trick are compared.  
In order to guarantee the global convergence of the method, Armijo back-tracking \eqref{eq:new:3} with parameters $\rho = 0.5$ and $\sigma  = 0.1$ is employed. 
The iterations \eqref{eq:for:reg:1} are terminated, when $|\nabla F_\eps(\J^n_k)| \leq \textit{tol}~|\nabla F_\eps(\J^0_k)|$ is reached for the first time. In our simulations, we use \textit{tol} = $10^{-8}$ as a tolerance. The same criterion is used for the inverse iteration \eqref{eq:inv:reg:1}.

\subsection{Regularization error}
In a first series of computations, we validate the error estimates of Assertion~\ref{theo:1} and \ref{theo:2}, where we have proven the convergence of the regularized solution to the solution of the non-smooth problem. 
For solving the unregularized hysteresis model \eqref{eq:for:1}, we use the dual approach proposed by \cite{Prigozhin2016}. The magnetic fluxes $\B^i = \mu_0 \H^i + \sum_k \J_k^i$ are later used as the input for the evaluation of the regularized inverse hysteresis operator. 
As an error measure, we use the relative errors
\begin{align*}
    \frac{\sum\nolimits_i |\B^\eps(t_i) - \B^0(t_i)|^2}{\sum\nolimits_i |\B^0(t_i)|^2} \quad \text{ resp. } \quad \frac{\sum\nolimits_i |\H^\eps(t_i) - \H^0(t_i)|^2}{\sum\nolimits_i |\H^0(t_i)|^2}
\end{align*}
in the Euclidean norm. 
To simulate the typical setting used in practice, we set the tolerance in the Newton solver to $\textit{tol} = \eps$.
\begin{figure}[ht!]
    \centering
    \includegraphics[trim={1cm 0cm 1cm 0.5cm},clip,width=0.45\textwidth]{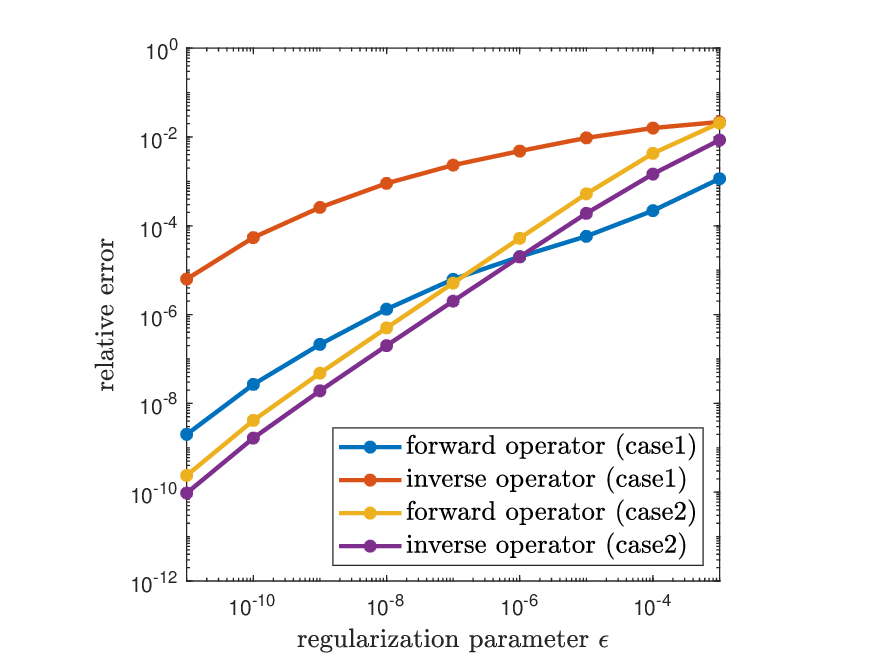}
    \caption{Convergence plot of the relative error for the regularized forward and inverse operator with uniform magnetic field excitation (case 1) and a rotation field (case 2). The number of pinning forces was chosen as $K = 20$.}
    \label{fig:3}
\end{figure}
In Figure \ref{fig:3} the convergence of the regularization errors for the forward and the inverse hysteresis models is visualized for the two different excitation settings. As expected, the regularization errors vanish with $\eps \to 0$. For regularization parameter $\eps = 10^{-6}$ one can already observe excellent agreement with the original model. 
The results of Figure~\ref{fig:3} actually indicate convergence of the form $|\J_k^\eps - \J_k|^2=O(\eps)$ instead of $O(\sqrt(\eps))$. A theoretical justification of this rate might need a different kind of analysis and is left for future investigation.

\subsection{Equivalence of hysteresis operators}
In a second series of tests, we illustrate the result of Assertion~\ref{thm:3}, i.e., that the regularized forward and inverse hysteresis operator are in fact inverse to each other. 
To do so, we take the two different magnetic field excitations $\H_{\text{uni}}^i$ and $\H_{\text{rot}}^i$ as input for the regularized magnetic hysteresis operator, and compute the corresponding sequence of magnetic fluxes $\B^i = \mu_0 \H^i + \sum_k \J_k^\eps(t^i)$. These fluxes then serve as input for the regularized inverse operator.
The corresponding results for the two different excitation sequences are depicted in Fig. \ref{fig:1} and \ref{fig:2}, which validate for the uniform and rotational magnetic field excitation the theoretical result. Here a regularization parameter of $\eps = 10^{-8}$ and a tolerance $\textit{tol}=10^{-8}$ was chosen to stop the Newton iterations for all computations.
\begin{figure}[ht!]
    \centering
    \includegraphics[trim={1cm 0.1cm 1cm 0.6cm},clip,width=0.45\textwidth]{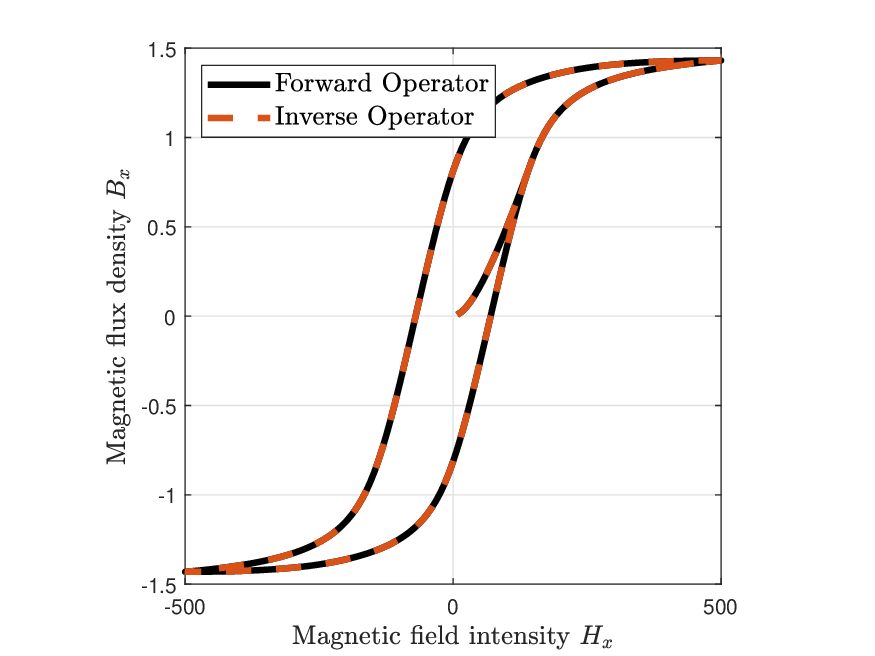}
    \vspace{-0.25cm}
    \caption{Hysteresis loop of the forward and inverse operator with $K = 20$ pinning forces and a unidirectional magnetic field $\H^i = 500\,(\text{sin}(t^i (5/2)\pi),0)$}
    \label{fig:1}
\end{figure}
\begin{figure}[ht!]
    \centering
    \includegraphics[trim={1cm 0.1cm 1cm 0.6cm},clip,width=0.45\textwidth]{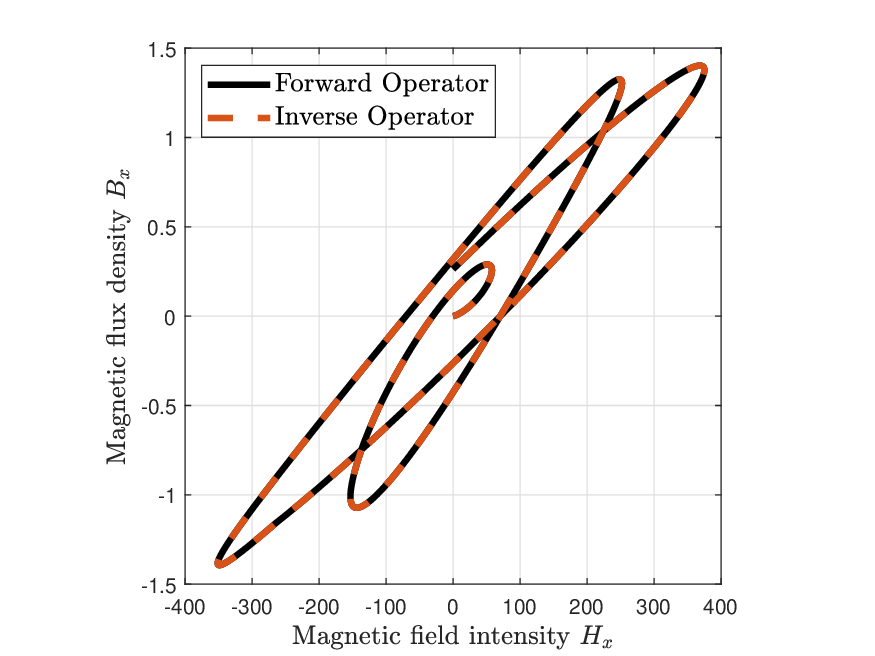}
    \vspace{-0.25cm}
    \caption{Hysteresis loop of the forward and inverse operator with $K = 20$ pinning forces and a rotational magnetic field $\H^i = H_m(t^i)\cdot(\sin(5 \pi t^i),\cos(5 \pi t^i))$ with $H_m(t^i) = 500\cdot\min(t^i,0.75)$}
    \label{fig:2}
\end{figure}

\subsection{Computational performance}
In the last series of tests, we want to compare the computational performance of the algorithms proposed for the evaluation of the regularized forward and inverse hysteresis operators. 
Here we track the wall-time and iteration numbers for the uniform magnetic field excitation $\H_{\text{uni}}$ and a varying number of pinning forces $K$. 
For the forward operator, we also report results for the duality approach of~\cite{Prigozhin2016}, which can be applied for the unregularized problem. 
For the inverse operator, we further compare the straight-forward inversion of the Newton-systems as well as the Schur complement technique discussed in Section \ref{subsec:schur}. 
The corresponding results are depicted in Table \ref{table:1}.

\begin{table}[ht!]
\centering
\caption{Wall-time and iteration numbers for the forward and two implementations of the inverse operator with varying number of pinning forces $K$ and uniform magnetic field excitation $\H_{\text{uni}}$.
Experiments where performed on a laptop with an AMD Ryzen 5 PRO 5675U processor with 4.3 GHz.} 
\label{table:1}
\resizebox{0.49\textwidth}{!}{%
\begin{tabular}{>{\bfseries}l||c c c c c}
\rowcolor{gray!20}
Method $\backslash$ $K$ & $5$ & $10$ & $20$ & $50$ & $100$ \\
\midrule
Prigozhin (Time)                & \best{0ms}   & \best{1ms}   & \best{2ms}   & \best{5ms}  & \best{11ms} \\
Prigozhin (Iterations)          & 0.61         & 0.7          & 0.75         & 0.78        & 0.79         \\
\midrule
Forward (Time)                  & \best{2ms}   & \best{5ms}   & \best{10ms}  & \best{27ms}  & \best{54ms} \\
Forward (Iterations)            & 5.95         & 6.21         & 6.31         & 6.38         & 6.39         \\
\midrule
Inverse, Standard (Time)        & \worst{6ms}  & \worst{14ms} & \worst{40ms} & \worst{289ms} & \worst{1071ms}       \\
Inverse, Standard (Iterations)  & 6.43         & 6.41         & 6.41         & 6.58         & 6.58         \\
\midrule
Inverse, Efficient (Time)       & \best{2ms}   & \best{5ms}   & \best{10ms}  & \best{27ms}  & \best{59ms}        \\
Inverse, Efficient (Iterations) & 6.43         & 6.41         & 6.41         & 6.59         & 6.58         \\
\bottomrule
\end{tabular}}
\end{table}

As expected, the computation times of the regularized forward and inverse operator are very similar when using the Schur complement technique for the latter. The standard implementation of the regularized inverse operator, on the other hand, takes considerably longer for larger numbers $K$ of partial polarizations. The duality-based method of \cite{Prigozhin2016} is still a bit faster than the Newton-type methods used for the realization of the regularized hysteresis operator. This method, however, is less flexible and can not be employed for solution of the regularized problems, for the inverse hysteresis operator, or for the forward model using effective fields. Also the extension to three-dimensional problems is not straight-forward.

\section{Discussion and future Work}
In this work, we investigated the efficient numerical evaluation of forward and inverse energy-based hysteresis operators. By regularization, the non-smoothness of the underlying minimization problems could be eliminated, such that standard Newton-type methods could be used for their reliable and efficient solution. The regularization error was shown to be fully controllable by careful choice of the regularization parameter. In addition, an algebraic trick was presented to accelerate the solution of the Newton-systems for the inverse hysteresis operator. Using such optimized implementations, the forward and inverse hysteresis operator could be evaluated in $O(K)$ complexity, where $K$ denotes the number of partial polarizations used in the models. 
By formal differentiation of the optimality systems governing the minimization problems of the regularized hysteresis operators, one can immediately compute the Jacobians $\nabla_\H \mathcal{B}(\H;\{\J_{k,p}\})$ and $\nabla_\B \mathcal{H}(\B;\{J_{k,p}\})$ of the regularized forward and inverse hysteresis operators. 
This allows the application of Newton-type solvers also for the corresponding magnetic field problems based on the scalar or vector potential formulations. Further investigations in this direction will be reported in future work.

\section*{Acknowledgements}
This work was supported by the joint DFG/FWF Collaborative Research Centre CREATOR (DFG: Project-ID 492661287/TRR 361; FWF: 10.55776/F90).

\bibliographystyle{unsrt}

\begin{thebibliography}{10}
	
	\bibitem{Dlala2006}
	E.~Dlala, J.~Saitz, and A.~Arkkio.
	\newblock Inverted and forward preisach models for numerical analysis of
	electromagnetic field problems.
	\newblock {\em Magnetics, IEEE Transactions on}, 42:1963 -- 1973, 2006.
	
	\bibitem{Billah2020}
	M.~M. Billah, F.~Martin, and A.~Belahcen.
	\newblock A computationally effective method for iron loss estimation in a
	synchronous machine from a static field solution.
	\newblock In {\em 2020 International Conference on Electrical Machines (ICEM)},
	volume~1, pages 751--757, 2020.
	
	\bibitem{Jiles1986}
	D.~C. Jiles and D.~L. Atherton.
	\newblock Theory of ferromagnetic hysteresis.
	\newblock {\em J. Magnetism. Magn. Materials}, 61:48--60, 1986.
	
	\bibitem{Mayergoyz1988}
	I.~D. Mayergoyz and G.~Friedman.
	\newblock Generalized {P}reisach model of hysteresis.
	\newblock {\em IEEE Trans. Magn.}, 24:212--217, 1988.
	
	\bibitem{Serpico2004}
	C.~Serpico, M.~d'Aquino, C.~Visone, and D.~Davino.
	\newblock A new class of preisach-type isotropic vector model of hysteresis.
	\newblock {\em Physica B: Condensed Matter}, 343(1):117--120, 2004.
	\newblock Proceedings of the Fourth Intional Conference on Hysteresis and
	Micromagnetic Modeling.
	
	\bibitem{Leite2005}
	J.V. Leite, N.~Sadowski, P.~Kuo-Peng, and J.P.A. Bastos.
	\newblock A new anisotropic vector hysteresis model based on stop hysterons.
	\newblock {\em IEEE Transactions on Magnetics}, 41(5):1500--1503, 2005.
	
	\bibitem{Bergqvist1997}
	A.~Bergqvist.
	\newblock Magnetic vector hysteresis model with dry friction-like pinning.
	\newblock {\em Physica B: Condensed Matter}, 233:342--347, 1997.
	
	\bibitem{Bergqvist1997a}
	A.~Bergqvist, A.~Lundgren, and G.~Engdahl.
	\newblock Experimental testing of an anisotropic vector hysteresis model.
	\newblock {\em IEEE Trans. Magn.}, 33:4152 -- 4154, 1997.
	
	\bibitem{Henrotte2006}
	F.~Henrotte, A.~Nicolet, and K.~Hameyer.
	\newblock An energy-based vector hysteresis model for ferromagnetic materials.
	\newblock {\em COMPEL}, 25:71--80, 2006.
	
	\bibitem{Lavet2013}
	V.~Francois-Lavet, F.~Henrotte, L.~Stainier, L.~Noels, and C.~Geuzaine.
	\newblock An energy-based variational model of ferromagnetic hysteresis for
	finite element computations.
	\newblock {\em J. Comp. Appl. Math.}, 246:243--250, 2013.
	
	\bibitem{Prigozhin2016}
	L.~Prigozhin, V.~Sokolovsky, J.~W. Barrett, and S.~E. Zirka.
	\newblock On the energy-based variational model for vector magnetic hysteresis.
	\newblock {\em IEEE Trans. Magn.}, 52:1--11, 2016.
	
	\bibitem{Kaltenbacher2022}
	M.~Kaltenbacher, K.~Roppert, L.~D. Domenig, and H.~Egger.
	\newblock Comparison of energy based hysteresis models.
	\newblock In {\em COMPUMAG 2022}, pages 1--4, 2022.
	
	\bibitem{Sauseng2022}
	A.~Sauseng, L.~Domenig, K.~Roppert, and M.~Kaltenbacher.
	\newblock Adaptions of the energy-based hysteresis model for correct rotational
	losses.
	\newblock In {\em 2022 IEEE 20th Biennial Conference on Electromagnetic Field
		Computation (CEFC)}, pages 01--02, 2022.
	
	\bibitem{Roppert2025}
	K.~Roppert, M.~Kaltenbacher, L.~Domenig, and L.~Daniel.
	\newblock Magneto-elastic vector hysteresis modelling for electromagnetic
	devices: A combination of a multiscale model with the energy-based hysteresis
	framework, 03 2025.
	
	\bibitem{Upadhaya2020}
	B.~Upadhaya, P.~Rasilo, L.~Perkkiö, P.~Handgruber, A.~Belahcen, and A.~Arkkio.
	\newblock Comparison of anisotropic energy-based and jiles–atherton models of
	ferromagnetic hysteresis.
	\newblock {\em IEEE Transactions on Magnetics}, 56(4):1--7, 2020.
	
	\bibitem{egger2025semi}
	H.~Egger and F.~Engertsberger.
	\newblock A semi-smooth {N}ewton method for magnetic field problems with
	hysteresis.
	\newblock {\em arXiv:2506.01499}, 2025.
	
	\bibitem{Jacques2015}
	K.~Jacques, R.~Sabariego, C.~Geuzaine, and J.~Gyselinck.
	\newblock Inclusion of a direct and inverse energy-consistent hysteresis model
	in dual magnetostatic finite element formulations.
	\newblock {\em IEEE Trans. Magn.}, 52:7300304, 2015.
	
	\bibitem{Domenig2024}
	L.~Domenig, K.~Roppert, and M.~Kaltenbacher.
	\newblock Incorporation of a 3-d energy-based vector hysteresis model into the
	finite element method using a reduced scalar potential formulation.
	\newblock {\em IEEE Transactions on Magnetics}, PP:1--1, 06 2024.
	
	\bibitem{egger2025quasi}
	H.~Egger, F.~Engertsberger, L.~Domenig, K.~Roppert, and M.~Kaltenbacher.
	\newblock On nonlinear magnetic field solvers using local {Q}uasi-{N}ewton
	updates.
	\newblock {\em Comp. Math. Appl.}, 183:20--31, 2025.
	
	\bibitem{Jacques2018}
	K.~Jacques.
	\newblock {\em Energy-Based Magnetic Hysteresis Models - Theoretical
		Development and Finite Element Formulations}.
	\newblock PhD thesis, Université de Liège, 2018.
	
	\bibitem{egger2024inverse}
	H.~Egger, F.~Engertsberger, L.~Domenig, K.~Roppert, and M.~Kaltenbacher.
	\newblock On forward and inverse magnetic vector hysteresis operators.
	\newblock {\em IEEE Trans. Magn.}, 61:7300207, 2025.
	
	\bibitem{Nocedal2006}
	J.~Nocedal and S.~Wright.
	\newblock {\em Numerical Optimization. 2nd ed.}
	\newblock Springer, 2006.
	
\end{thebibliography}

\end{document}